\long\def\symbolfootnote[#1]#2{\begingroup\def\thefootnote{\fnsymbol{footnote}}\footnote[#1]{#2}\endgroup}
\newtheorem{theorem}{Theorem}[section]
\newtheorem{corollary}[theorem]{Corollary}
\theoremstyle{remark}
\newtheorem{remark}[theorem]{Remark}
\theoremstyle{definition}
\theoremstyle{proposition}
\newtheorem{proposition}[theorem]{Proposition}
\numberwithin{equation}{section}
\begin{document}
\author{Linfeng Zhou}
\title[The Finsler surface with K=0 and J=0]{The Finsler surface with K=0 and J=0 }
\date{}
\maketitle

\begin{abstract} In this short note, we verify Bryant's claim:  there does exist the singular Landsberg Finsler surface with a vanishing flag curvature which is not Berwaldian. 
\\

\noindent\textbf{2000 Mathematics Subject Classification:}
53B40, 53C60, 58B20.\\
\textbf{Keywords and Phrases: Berwald metric, Landsberg metric, 
constant flag curvature.}
\end{abstract}

\section{Introduction}
In Finsler geometry, there is a famous unicorn open problem: does every regular Landsberg metric need to be Berwaldian?  If considering this problem in the more general situation, it is not true. Actually,  G.S. Asanov \cite{As} and Z. Shen \cite{Sh} have constructed the singular non-Berwaldian Landsberg Finsler metrics belonging to $(\alpha,\beta)$ metrics. Even earlier, R. Bryant claimed there do exist the singular Landsberg Finsler surfaces which are not Berwaldian. Furthermore, he announced that among them there is the surface depending on a function of two variables with a vanishing flag curvature \cite{Ba}. 

The main purpose of this paper is to prove the Bryant's argument by using a different approach. More precisely, we search the required Finsler surface among the spherically symmetric metrics defined on a domain in $R^2$. 

The spherically symmetric metrics belong to the generalized $(\alpha, \beta)$ metrics and  since they have a nice rotational symmetry, most geometric quantities become simple and neat. In the paper \cite{MZ},  the Berwald curvature, the Landsberg curvatrue, the Riemann curvature and the Ricci curvature of this type are discussed and some interesting results are obtained.  

In the section 2 of this paper, the mean Berwald curvature and the mean Landsberg curvature are also concerned. It is found that in two dimensional case, the mean Landsberg curvature can be easier. From it, one can observe that the equation of the Landsberg surfaces is weaker than the equation of the higher dimensional Landsberg metrics. 

When focusing on the  two dimensional Finsler surfaces, one class of  the possible geodesic spray coefficients of the Landsberg metrics, which arouses from a classification theorem of Landsberg metric of higher dimension \cite{MZ}, is studied. The following result is achieved: if the spray coefficient is given by $G^i=uPy^i+u^2Qy^i$  where $P=f_1(r)s+f_2(r)\sqrt{r^2-s^2}$ and $Q=c_0(r)+c_2(r)s^2+c_1(r)s\sqrt{r^2-s^2}$, then it must be Landsbergian. Furthermore,  if $f_2(r)\neq -\frac{r^2c_1(r)}{3}$, then it is not Berwaldian. 

The only frustrating thing is this metric is singular when a tangent vector is along the direction of  the radius. However, just looking at the equation (\ref{2-lce}) of the Landsberg surfaces, it is still hopeful to find some regular counterexamples of the unicorn problem. 

In the final section, by applying the constant Ricci curvature equation, the required singular Lansberg Finsler surface with K=0  is constructed. It just depends on a one variable function and a constant. This surface is chosen from the Landsberg surfaces in section 2 and is complicated a little bit. For the readers' convenience, we list it here:
 \[F=u \exp(\int_0^s\frac{(c+1)s^2-(2r^2c_0-1)s\sqrt{r^2-s^2}-2r^2c}{(c+1)s^3-(c+1)r^2s+(2c_0r^2-1)(r^2-s^2)\sqrt{r^2-s^2}} ds)a(r)\]
 is defined on a domain $\Omega$ in $R^2$, where $r=\sqrt{x_1^2+x_2^2}$, $u=\sqrt{y_1^2+y_2^2}$, $s=x_1y_1+x_2y_2$ and $$a(r)=\exp(\int-\frac{4c_0r^2r^4-4r^2c_0+1+2c(2c_0r^2-1)(c-1)}{r(4c_0r^4-4r^2c_0+1)}dr),$$
$c_0$ is a smooth function of $r$ and $c\neq\frac{1}{3}$ is a constant.

\section{The mean Berwald curvature and the mean Landsberg curvature}

Now let us consider a spherically symmetric Finsler metric $F=u\phi(r,s)$ defined on a domain $\Omega\subseteq R^n$ where $u=|y|$, $r=|x|$ and $s=\frac{\langle x, y\rangle}{|y|}$. As we know, its metric tensor is given by 
\begin{eqnarray}\label{mt}g_{ij}&=&\phi(\phi-s\phi_s)\delta_{ij}+(\phi_s^2+\phi\phi_{ss})x^ix^j+[s^2\phi\phi_{ss}-s(\phi-s\phi_s)\phi_s]\frac{y^i}{u}\frac{y^j}{u}\nonumber\\
&&+[(\phi-s\phi_s)\phi_s-s\phi\phi_{ss}](x^i\frac{y^j}{u}+x^j\frac{y^i}{u}).
\end{eqnarray} and its spray coefficients $G^i$ can be expressed as
\[G^i=uPy^i+u^2Qx^i\]
where 
$$P=-\frac{1}{\phi}\big(s\phi+(r^2-s^2)\phi_s\big)Q+\frac{1}{2r\phi}(s\phi_r+r\phi_s)$$
and 
$$Q=\frac{1}{2r}\frac{-\phi_r+s\phi_{rs}+r\phi_{ss}}{\phi-s\phi_s+(r^2-s^2)\phi_{ss}}.$$

In the previous paper \cite{MZ}, its Berwald curvature is computed  as following
\begin{eqnarray*}
B^i_{\ jkl}&=&\frac{P_{ss}}{u}(\delta^i_{\ j} x^kx^l+\delta^i_{\ l}x^jx^k+\delta^i_{\ k}x^jx^l)+(\frac{P}{u}-\frac{s}{u}P_s)(\delta^i_{\ j}\delta_{kl}+\delta^i_{\ k}\delta_{jl}+\delta^i_{\ l}\delta_{jk})\\
&&-\frac{s}{u^2}P_{ss}\big(\delta^i_{\ j}(x^ky^l+x^ly^k)+\delta^i_{\ k}(x^jy^l+x^ly^j)+\delta^i_{\ l}(x^jy^k+x^ky^j)\big)\\
&&-\frac{s}{u^2}P_{ss}y^i(\delta_{jk}x^l+\delta_{jl}x^k+\delta_{kl}x^j)+(\frac{Q_s}{u}-\frac{s}{u}Q_{ss})x^i(\delta_{jk}x^l+\delta_{jl}x^k+\delta_{kl}x^j)\\
&&+(\frac{s^2}{u^3}P_{ss}+\frac{s}{u^3}P_s-\frac{P}{u^3})(\delta^i_{\ j}y^ky^l+\delta^i_{\ k}y^jy^l+\delta^i_{\ l}y^jy^k)\\
&&+(\frac{s^2}{u^3}P_{ss}+\frac{s}{u^3}P_s-\frac{P}{u^3})y^i(\delta_{jk}y^l+\delta_{jl}y^k+\delta_{kl}y^j)\\
&&+(\frac{3}{u^5}P-\frac{s^3}{u^5}P_{sss}-\frac{6s^2}{u^5}P_{ss}-\frac{3s}{u^5}P_s)y^iy^jy^ky^l\\
&&+(\frac{s^2}{u^4}P_{sss}+\frac{3s}{u^4}P_{ss})y^i(y^jy^kx^l+y^jy^lx^k+y^ky^lx^j)+\frac{P_{sss}}{u^2}y^ix^jx^kx^l\\
&&-(\frac{P_{ss}}{u^3}+\frac{s}{u^3}P_{sss})y^i(y^jx^kx^l+y^kx^jx^l+y^lx^jx^k)\\
&&+(\frac{s^2}{u^3}Q_{sss}+\frac{s}{u^3}Q_{ss}-\frac{Q_s}{u^3})x^i(x^jy^ky^l+x^ky^jy^l+x^ly^jy^k)\\
&&-\frac{s}{u^2}Q_{sss}x^i(x^jx^ly^k+x^jx^ky^l+x^kx^ly^j)+\frac{Q_{sss}}{u}x^ix^jx^kx^l\\
&&+(\frac{s^2}{u^2}Q_{ss}-\frac{s}{u^2}Q_s)x^i(\delta_{kl}y^j+\delta_{jl}y^k+\delta_{jk}y^l)\\
&&+(\frac{3s}{u^4}Q_s-\frac{3s^2}{u^4}Q_{ss}-\frac{s^3}{u^4}Q_{sss})x^iy^jy^ky^l
\end{eqnarray*}
and it is proved that all Berwald metrics of this type must be Riemannian when $n\geq 3$.  

The mean Berwald curvature is the trace of Berwald curvature and it is defined as
$E:=E_{ij}dx^i\otimes dx^j$ where 
$E_{ij}:=B^m_{\ ijm}$. Plugging the Berwald curvature into the definition, one can get the formula 
\begin{eqnarray*}
E_{ij}&=&\frac{\delta_{ij}}{u}\big((n+1)(P-sP_s)+(r^2-s^2)(Q_s-sQ_{ss})\big)\\
&&+\frac{y^iy^j}{u^3}\big((n+1)(s^2P_{ss}+sP_s-P)+r^2(s^2Q_{sss}+sQ_{ss}-Qs)\\
&&\qquad+3s^2Q_s-3s^3Q_{ss}-s^4Q_{sss}\big)\\
&&+\frac{x^ix^j}{u}\big((n+1)P_{ss}+2(Q_s-sQ_{ss})+(r^2-s^2)Q_{sss}\big)\\
&&-\frac{(x^iy^j+x^jy^i)}{u^2}\big( (n+1)P_{ss}+2(Q_s-sQ_{ss})+(r^2-s^2)Q_{sss}\big)s.
\end{eqnarray*}
A Finsler metric is called weakly Berwaldian if its mean Berwald curvature vanishes.

\begin{proposition} \label{mbp} A spherically symmetric Finsler metric $(\Omega, F)$ in $R^n$ is weakly Berwaldian if and only its spray coefficient satisfies the equation
\begin{equation}\label{mbe}
(n+1)(P-sP_s)+(r^2-s^2)(Q_s-sQ_{ss})=0.\end{equation}
\end{proposition}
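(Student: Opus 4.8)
The plan is to substitute the displayed formula for $E_{ij}$ into the definition $E=E_{ij}\,dx^i\otimes dx^j$ and show that its identical vanishing on $\Omega$ is equivalent to (\ref{mbe}). Write the three brackets occurring in that formula as
\[A:=(n+1)(P-sP_s)+(r^2-s^2)(Q_s-sQ_{ss}),\qquad C:=(n+1)P_{ss}+2(Q_s-sQ_{ss})+(r^2-s^2)Q_{sss},\]
and let $B$ denote the long bracket multiplying $y^iy^j/u^3$, so that $E_{ij}=\frac{A}{u}\delta_{ij}+\frac{B}{u^3}y^iy^j+\frac{C}{u}x^ix^j-\frac{Cs}{u^2}(x^iy^j+x^jy^i)$; note that the $x^ix^j$-term and the cross term share the one bracket $C$. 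The first — and essentially only — computational step is to verify the two identities $\partial_s A=-sC$ (immediate on differentiating the definition of $A$) and $B=s^2C-A$ (a one-line rearrangement, matching $(n+1)(s^2P_{ss}+sP_s-P)$, $r^2(s^2Q_{sss}+sQ_{ss}-Q_s)$, $3s^2Q_s-3s^3Q_{ss}-s^4Q_{sss}$ against $s^2C-A$). Thus $B$ and $C$ are completely determined by $A$.

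Feeding these back in and completing the square in the $x^ix^j$, $x^iy^j+x^jy^i$, $y^iy^j$ terms collapses the whole expression to
\[E_{ij}=\frac{A}{u}\Bigl(\delta_{ij}-\frac{y^iy^j}{u^2}\Bigr)+\frac{C}{u}\Bigl(x^i-\frac{s}{u}y^i\Bigr)\Bigl(x^j-\frac{s}{u}y^j\Bigr)=\frac{A}{u}h_{ij}+\frac{C}{u}\,m^im^j,\]
where $h_{ij}=\delta_{ij}-y^iy^j/u^2$ is the Euclidean angular form (of rank $n-1$, with $h_{ij}y^j=0$) and $m^i=x^i-\frac{s}{u}y^i$ is the component of $x$ orthogonal to $y$, of squared length $|m|^2=r^2-s^2$. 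This normal form makes the proposition transparent.

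For the "if" direction, if (\ref{mbe}) holds then $A\equiv0$ on $\Omega$, hence $C=-s^{-1}\partial_s A\equiv0$ (and $B=s^2C-A\equiv0$), so $E_{ij}\equiv0$ and $F$ is weakly Berwaldian. For the converse, assume $E_{ij}\equiv0$. Fix $x\in\Omega$ and $y$ not parallel to $x$, so $m\neq0$, $m\perp y$; choosing a nonzero $w$ orthogonal to both $x$ and $y$ and contracting $E_{ij}w^iw^j=0$ annihilates the $m^im^j$-term (since $w\cdot y=0$ forces $w\cdot m=0$) and leaves $\frac{A}{u}|w|^2=0$, i.e.\ $A(r,s)=0$ at that $(r,s)$. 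As $(x,y)$ ranges, such pairs with $s^2<r^2$ fill an open region, and the remaining radial directions $s=\pm r$ (and, if relevant, the point $x=0$) are recovered by continuity (resp.\ by contracting against a $w\perp y$ there), so $A\equiv0$, which is exactly (\ref{mbe}); then $C\equiv B\equiv0$ automatically.

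The step requiring the most care is this last contraction in low dimension. The argument needs a $w$ orthogonal to both $x$ and $y$, which exists precisely when $n\ge3$; for a Finsler \emph{surface} ($n=2$) no such $w$ is available, and in fact there $h_{ij}$ is itself rank one with $h_{ij}=(r^2-s^2)^{-1}m^im^j$, so $E_{ij}\equiv0$ only yields the single relation $A+(r^2-s^2)C=0$, equivalently the linear ODE $sA=(r^2-s^2)\partial_sA$ in $s$, whose solutions are $A=h(r)(r^2-s^2)^{-1/2}$. To land on (\ref{mbe}) in that case one must additionally exclude the blow-up along $s=\pm r$ — via the admissibility/boundedness of $A$ for the metrics under consideration — or else read the statement in dimension $n\ge3$, the surface case being where the weaker mean-Landsberg equation of the next section takes over.
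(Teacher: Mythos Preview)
Your argument follows the paper's line: both reduce the vanishing of $E_{ij}$ to the single condition $A=0$ by observing that the other two coefficient brackets are determined by $A$ (your identities $\partial_s A=-sC$ and $B=s^2C-A$ are precisely the paper's ``differentiating the first equation will yield the third'' and ``adding the first and the third will give the second''). Your normal form $E_{ij}=\tfrac{A}{u}h_{ij}+\tfrac{C}{u}m^im^j$ together with the contraction against $w\perp x,y$ makes explicit the converse step that the paper covers with ``it is easy to see,'' and your caution about $n=2$---where the two tensors collapse and only $A+(r^2-s^2)C=0$ is forced---is a genuine refinement that the paper's proof does not address.
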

\begin{proof} From the formula of the mean Berwald curvature, it is easy to see that the metric is weakly Berwaldian if and only if 
\begin{equation*}
\left\{ \begin{array}{l}
(n+1)(P-sP_s)+(r^2-s^2)(Q_s-sQ_{ss})=0\\
\\
(n+1)(s^2P_{ss}+sP_s-P)+r^2(s^2Q_{sss}+sQ_{ss}-Qs)+3s^2Q_s-3s^3Q_{ss}-s^4Q_{sss}=0\\
\\
(n+1)P_{ss}+2(Q_s-sQ_{ss})+(r^2-s^2)Q_{sss}=0.\\
\end{array}\right.
\end{equation*}
Notice that if differentiating the first equation will yield  the third equation of above system and adding the first equation and the third equation will give the second equation. Hence the equation system is equivalent to 
$$(n+1)(P-sP_s)+(r^2-s^2)(Q_s-sQ_{ss})=0.$$
\end{proof}
\begin{remark} From the equation (\ref{mbe}), it is not difficult for one to find those weak Berwald metrics which are not Berwaldian.  
\end{remark}

The Landsberg curvature of a spherically symmetric Finsler metric has been already be known as \cite{MZ}
\begin{eqnarray*}L_{jkl}&=&-\frac{\phi}{2}[L_1x^jx^kx^l+L_2(x^j\delta_{kl}+x^k\delta_{jl}+x^l\delta_{jk})+L_3\frac{y^j}{u}\frac{y^k}{u}\frac{y^l}{u}\\
&&+L_4(\frac{y^j}{u}\delta_{kl}+\frac{y^k}{u}\delta_{jl}+\frac{y^l}{u}\delta_{jk})+L_5(\frac{y^j}{u}x^kx^l+\frac{y^k}{u}x^jx^l+\frac{y^l}{u}x^jx^k)\\
&&+L_6(x^j\frac{y^k}{u}\frac{y^l}{u}+x^k\frac{y^j}{u}\frac{y^l}{u}+x^l\frac{y^j}{u}\frac{y^k}{u})]
\end{eqnarray*}
where 
\begin{eqnarray*}
L_1&=&3\phi_sP_{ss}+\phi P_{sss}+\big(s\phi+(r^2-s^2)\phi_s\big)Q_{sss},\\
L_2&=&-s\phi P_{ss}+\phi_s(P-sP_s)+(s\phi+(r^2-s^2)\phi_s)(Q_s-sQ_{ss}),\\
L_3&=&-s^3L_1+3sL_2,\\
L_4&=&-sL_2,\\
L_5&=&-sL_1,\\
L_6&=&s^2L_1-L_2.
\end{eqnarray*}
Furthermore, when $n\geq3$, we classify the Landsberg metrics of this type \cite{MZ}.

The mean Landsberg curvature is the trace of the Landsberg curvature and it is given by 
$J:=J_idx^i$ where $J_i:=L_{ijk}g^{jk}$ and $g^{jk}$ is the inverse of the metric tensor $g_{jk}$. Since $g_{ij}$ is shown in (\ref{mt}), one can calculate its inverse $g^{ij}$:
\[g^{ij}=\tilde{\rho}_0\delta^{ij}+\tilde{\rho}_1\frac{y^iy^j}{u^2}+\tilde{\rho}_2(\frac{x^iy^j}{u}+\frac{x^jy^i}{u})+\tilde{\rho}_3x^ix^j\]
where $\tilde{\rho}_0=\frac{1}{\phi(\phi-s\phi_s)}$, $\tilde{\rho}_1=\frac{(s\phi+(r^2-s^2)\phi_s)(\phi\phi_s-s\phi_s^2-s\phi\phi_{ss})}{\phi^3(\phi-s\phi_s)(\phi-s\phi_s+(r^2-s^2)\phi_{ss})}$, $\tilde{\rho}_2=-\frac{\phi\phi_s-s\phi_s^2-s\phi\phi_{ss}}{\phi^2(\phi-s\phi_s)(\phi-s\phi_s+(r^2-s^2)\phi_{ss})}$, $\tilde{\rho}_3=-\frac{\phi_{ss}}{\phi(\phi-s\phi_s)(\phi-s\phi_s+(r^2-s^2)\phi_{ss})}$. Then the mean Landsberg curvature can be written as
\[J_i=x^i J_1+\frac{y^i}{u}J_2\]
where 
\begin{eqnarray*}
J_1&=&-\frac{\phi}{2}[(r^2-s^2)\big(\tilde{\rho}_0+(r^2-s^2)\tilde{\rho}_3\big)L_1+\big((n+1)\tilde{\rho}_0+3(r^2-s^2)\tilde{\rho}_3\big)L_2],\\
J_2&=&-sJ_1,\\
L_1&=&3\phi_sP_{ss}+\phi P_{sss}+\big(s\phi+(r^2-s^2)\phi_s\big)Q_{sss},\\
L_2&=&-s\phi P_{ss}+\phi_s(P-sP_s)+(s\phi+(r^2-s^2)\phi_s)(Q_s-sQ_{ss}).
\end{eqnarray*}
A Finsler metric is called weakly Landsbergian if its mean Landsberg curvature vanishes. From the formula of the mean Landsberg curvature, one can know that:
\begin{proposition}\label{mlp}A spherically symmetric Finsler metric $(\Omega, F)$ in $R^n$ is the weakly Landsberg metric if and only its spray coefficient satisfies the equation
\begin{equation}\label{mle}
(r^2-s^2)\big(\tilde{\rho}_0+(r^2-s^2)\tilde{\rho}_3\big)L_1+\big((n+1)\tilde{\rho}_0+3(r^2-s^2)\tilde{\rho}_3\big)L_2=0\end{equation}
where 
\begin{eqnarray*}
\tilde{\rho}_0&=&\frac{1}{\phi(\phi-s\phi_s)},\\
\tilde{\rho}_3&=&-\frac{\phi_{ss}}{\phi(\phi-s\phi_s)(\phi-s\phi_s+(r^2-s^2)\phi_{ss})},\\
L_1&=&3\phi_sP_{ss}+\phi P_{sss}+\big(s\phi+(r^2-s^2)\phi_s\big)Q_{sss},\\
L_2&=&-s\phi P_{ss}+\phi_s(P-sP_s)+(s\phi+(r^2-s^2)\phi_s)(Q_s-sQ_{ss}).\\
\end{eqnarray*}
\end{proposition}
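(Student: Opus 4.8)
The plan is to read the criterion off directly from the displayed expression for the mean Landsberg curvature, just as equation (\ref{mbe}) was read off the mean Berwald curvature in Proposition \ref{mbp}; here the argument is actually shorter, since $J$ has only one independent scalar component. First I would substitute $J_2=-sJ_1$ into $J_i=x^iJ_1+\frac{y^i}{u}J_2$ to get
\[
J_i=J_1\Big(x^i-\frac{s}{u}\,y^i\Big).
\]
Because $s=\frac{\langle x,y\rangle}{u}$, the coefficient vector $x^i-\frac{s}{u}y^i$ is the part of $x$ orthogonal to $y$; its Euclidean norm is $\sqrt{r^2-s^2}$, so it vanishes exactly along the radial directions $y\parallel x$, which form a nowhere dense set in the slit tangent bundle. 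Consequently $J\equiv0$ on $\Omega$ if and only if the scalar $J_1=J_1(r,s)$ vanishes off the radial set, and hence, by continuity of $J_1$, everywhere.

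It then remains to see that $J_1=0$ is equivalent to (\ref{mle}). Since $F=u\phi$ is a genuine Finsler metric, $\phi$ is positive and the metric tensor (\ref{mt}) is nondegenerate, so the quantities $\phi-s\phi_s$ and $\phi-s\phi_s+(r^2-s^2)\phi_{ss}$ in the denominators of $\tilde\rho_0$ and $\tilde\rho_3$ are nonzero (indeed these are exactly the reciprocals that occur in the entries of $g^{ij}$). Therefore the prefactor $-\frac{\phi}{2}$ in
\[
J_1=-\frac{\phi}{2}\big[(r^2-s^2)\big(\tilde\rho_0+(r^2-s^2)\tilde\rho_3\big)L_1+\big((n+1)\tilde\rho_0+3(r^2-s^2)\tilde\rho_3\big)L_2\big]
\]
introduces no spurious zeros, and $J_1=0$ holds if and only if the bracket vanishes, which is precisely equation (\ref{mle}). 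The converse is immediate: if (\ref{mle}) holds then $J_1=0$, hence $J_2=-sJ_1=0$, hence $J_i\equiv0$.

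I do not expect a genuine obstacle in this proposition. The one point that deserves a line of justification is the passage from ``$J_i=0$ for every $i$'' to ``the single scalar function $J_1$ vanishes'': it rests on the generic linear independence of the position vector $x$ and the direction $y$ (equivalently $r^2-s^2>0$) together with the continuity of $J_1$ in $(r,s)$. Everything else — factoring out $-\phi/2$ and recognizing the bracket as the left-hand side of (\ref{mle}) — is purely formal once the formula for $J_i$ preceding the statement is in hand.
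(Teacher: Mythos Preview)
Your proposal is correct and follows the same approach as the paper, which in fact gives no explicit proof at all: the paper simply says ``From the formula of the mean Landsberg curvature, one can know that'' and states the proposition. Your write-up supplies the missing one-line justification (that $x^i-\tfrac{s}{u}y^i$ is generically nonzero, so $J_i\equiv0$ reduces to $J_1\equiv0$, and then $-\phi/2\neq0$ lets one strip the prefactor), which is exactly the content the paper leaves implicit.
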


\begin{remark} The equation of weak Landsberg metric is weaker than the equation of Landsberg metric: $L_1=0$ and $L_2=0$ and there should exist many weak Landsberg metrics which are not Landsbergian when the dimension $n\geq 3$. 
\end{remark}

Actually, in two dimensional case, the equation can be quite simple and we have the following corollary. 

\begin{corollary}\label{2-lcc}A spherically symmetric Finsler surface $(\Omega, F)$ in $R^2$ is the Landsberg metric if and only its spray coefficient satisfies the equation
\begin{equation}\label{2-lce} (r^2-s^2)L_1+3L_2=0\end{equation}
where 
\begin{eqnarray*}
L_1&:=&3\phi_sP_{ss}+\phi P_{sss}+\big(s\phi+(r^2-s^2)\phi_s\big)Q_{sss},\\
L_2&:=&-s\phi P_{ss}+\phi_s(P-sP_s)+(s\phi+(r^2-s^2)\phi_s)(Q_s-sQ_{ss}).\\
\end{eqnarray*}
\end{corollary}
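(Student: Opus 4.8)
The plan is to show that in dimension two the whole Landsberg tensor $L_{jkl}$ is a scalar multiple of one fixed rank-one symmetric tensor, so that its vanishing is governed by a single scalar equation rather than by the pair $L_1=0$, $L_2=0$ needed when $n\ge 3$.

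First I would eliminate $L_3,\dots,L_6$ from the displayed formula for $L_{jkl}$ by means of the relations $L_3=-s^3L_1+3sL_2$, $L_4=-sL_2$, $L_5=-sL_1$, $L_6=s^2L_1-L_2$, so as to rewrite
\[
L_{jkl}=-\frac{\phi}{2}\big(L_1\,\Sigma_{jkl}+L_2\,\Theta_{jkl}\big)
\]
with $\Sigma$ and $\Theta$ totally symmetric tensors assembled from $x$, $y$ and $\delta$. Then I would introduce the Euclidean component of $x$ orthogonal to $y$,
\[
m^i:=x^i-\frac{s}{u}\,y^i,\qquad \langle m,y\rangle=0,\qquad |m|^2=r^2-s^2,
\]
and regroup the six elementary building blocks ($x^jx^kx^l$, $x^j\delta_{kl}+\cdots$, $u^{-3}y^jy^ky^l$, $u^{-1}y^j\delta_{kl}+\cdots$, $u^{-1}y^jx^kx^l+\cdots$, $u^{-2}x^jy^ky^l+\cdots$) in terms of $m$ and $y$. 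A short computation should give the normal form
\[
\Sigma_{jkl}=m^jm^km^l,\qquad \Theta_{jkl}=m^j\pi_{kl}+m^k\pi_{jl}+m^l\pi_{jk},\qquad \pi_{kl}:=\delta_{kl}-\frac{y^ky^l}{u^2},
\]
which along the way re-derives the identity $L_{jkl}y^l=0$.

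Next I would specialise to $n=2$. In the plane the Euclidean orthogonal complement of $y$ is one-dimensional and spanned by $m$, so $\pi_{kl}=(r^2-s^2)^{-1}m^km^l$ on the set where $x$ and $y$ are linearly independent --- which is exactly the locus $s^2<r^2$ on which a metric of this kind is regular. Hence $\Theta_{jkl}=3(r^2-s^2)^{-1}\Sigma_{jkl}$ and therefore
\[
L_{jkl}=-\frac{\phi}{2(r^2-s^2)}\,\big[(r^2-s^2)L_1+3L_2\big]\,m^jm^km^l .
\]
Since $\phi\neq 0$ and the symmetric tensor $m^jm^km^l$ is nonzero on this dense open set, $L_{jkl}$ vanishes there if and only if $(r^2-s^2)L_1+3L_2=0$ throughout, which is precisely equation~(\ref{2-lce}). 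As a cross-check one may instead start from Proposition~\ref{mlp}: for $n=2$ the left-hand side of~(\ref{mle}) factors as $\big(\tilde\rho_0+(r^2-s^2)\tilde\rho_3\big)\big[(r^2-s^2)L_1+3L_2\big]$, with $\tilde\rho_0+(r^2-s^2)\tilde\rho_3=\big(\phi\,[\phi-s\phi_s+(r^2-s^2)\phi_{ss}]\big)^{-1}\neq 0$ for a genuine Finsler metric; thus the weakly Landsberg equation of Proposition~\ref{mlp} collapses to the same~(\ref{2-lce}), and together with the fact that in dimension two the Landsberg and the mean Landsberg curvatures vanish simultaneously (which is exactly the rank-one phenomenon used above) this reproves the corollary.

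I expect the only real obstacle to be the bookkeeping in the first step: substituting $L_3,\dots,L_6$ and re-expanding all six tensor structures in the $\{m,y\}$ frame without error, and keeping track of the degenerate set $s^2=r^2$, where $m$ vanishes and hence $\Sigma$, $\Theta$, $L_{jkl}$ all vanish automatically --- so that the displayed identity, and with it the stated equivalence, is naturally phrased over the regular locus $s^2<r^2$ of the metric. Everything beyond that is elementary linear algebra.
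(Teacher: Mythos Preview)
Your proof is correct. Your closing cross-check is in fact the paper's own argument: the paper simply specialises Proposition~\ref{mlp} to $n=2$, observes the factorisation
\[
\big(\tilde\rho_0+(r^2-s^2)\tilde\rho_3\big)\,\big[(r^2-s^2)L_1+3L_2\big]=0,
\]
and notes that the first factor is nonvanishing. (Incidentally, your value $\big(\phi[\phi-s\phi_s+(r^2-s^2)\phi_{ss}]\big)^{-1}$ for that factor is the correct one; the paper drops the extra $\phi$.) This route leans implicitly on the two-dimensional fact that the Landsberg and mean Landsberg curvatures vanish together.

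Your primary argument is genuinely different and more geometric: you work directly with the full Landsberg tensor, pass to the $\{m,y\}$ frame, and exhibit the explicit rank-one form
\[
L_{jkl}=-\frac{\phi}{2(r^2-s^2)}\,\big[(r^2-s^2)L_1+3L_2\big]\,m^jm^km^l,
\]
so the collapse from the pair $L_1=0$, $L_2=0$ to a single scalar equation is seen to come precisely from the one-dimensionality of $y^\perp$ in the plane. The paper's proof is shorter once Proposition~\ref{mlp} is in hand; yours is self-contained, explains \emph{why} the simplification occurs, and in passing re-derives $L_{jkl}y^l=0$. Both are valid; yours carries more insight.
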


\begin{proof} The proposition \ref{mlp} tells us the Finsler surface $F$ is Landsbergian if and only if 
\[(\tilde{\rho}_0+(r^2-s^2)\tilde{\rho}_3)((r^2-s^2)L_1+3L_2)=0.\]
Meanwhile, one can see that 
$$\tilde{\rho}_0+(r^2-s^2)\tilde{\rho}_3=\frac{1}{\phi-s\phi_s+(r^2-s^2)\phi_{ss}}\neq 0.$$ 
Thus $(r^2-s^2)L_1+3L_2=0.$

\end{proof}

\begin{remark} 
(1) As we know,  2-dimensional Finsler surface is Berwaldian if and only if it is Landsbergain and weakly Berwaldian.  Actually the equation (\ref{2-lce}) is weaker than the equation (\ref{mbe}) and one can see this via the following theorem \ref{2-le} although the metrics are singular. Hence it seems possible to find the regular Landsberg surface of this type which is not Berwaldian. 

(2) It is a challenge to write out all the solutions of the equation (\ref{2-lce}) since it involves two functions of two variables.  

\end{remark}

\begin{theorem}\label{2-le} Let $(\Omega, F)$ be a spherically symmetric Finsler surface in $R^2$. If assume its spray coefficient is given by $$G^i=uPy^i+u^2Qx^i$$ where $$P=f_1(r)s+f_2(r)\sqrt{r^2-s^2}$$ and $$Q=c_0(r)+c_2(r)s^2+c_1(r)s\sqrt{r^2-s^2},$$ then it is Landsbergian. If $f_2(r)\neq -\frac{r^2c_1(r)}{3}$, then it is not Berwaldian.
\end{theorem}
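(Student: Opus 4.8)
The plan is to verify the Landsberg equation (\ref{2-lce}) directly for the given ansatz, and then separately examine when the Berwald curvature $B^i_{\ jkl}$ vanishes. First I would compute the $s$-derivatives of $P$ and $Q$ that enter $L_1$ and $L_2$. Writing $w=\sqrt{r^2-s^2}$ so that $w_s=-s/w$, we have $P=f_1 s+f_2 w$, hence $P_s=f_1-f_2 s/w$, and one computes $P_{ss}$ and $P_{sss}$ as rational expressions in $s$ and $w$ with $w^{-1},w^{-3},w^{-5}$ factors. Similarly $Q=c_0+c_2 s^2+c_1 s w$ gives $Q_s=2c_2 s+c_1 w+c_1 s w_s=2c_2 s+c_1(w-s^2/w)$, and then $Q_{ss}$ and $Q_{sss}$. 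The key structural point I expect to exploit is that $P$ is chosen precisely so that the combination $P-sP_s$ is proportional to $w$, indeed $P-sP_s=f_1 s+f_2 w-s f_1+f_2 s^2/w=f_2(w+s^2/w)=f_2 r^2/w$; this cleanness is what makes the ansatz work. I do not yet know $\phi$ explicitly — it is recovered from $P,Q$ by integrating the structural ODEs relating $\phi$ to the spray — but the Landsberg equation (\ref{2-lce}) as written involves $\phi$ only through the factor shared by $L_1$ and $L_2$, so the strategy is to show $(r^2-s^2)L_1+3L_2$ factors through a $\phi$-independent identity.

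Concretely, I would substitute the computed derivatives into
\[
L_1=3\phi_sP_{ss}+\phi P_{sss}+\big(s\phi+(r^2-s^2)\phi_s\big)Q_{sss},\qquad
L_2=-s\phi P_{ss}+\phi_s(P-sP_s)+\big(s\phi+(r^2-s^2)\phi_s\big)(Q_s-sQ_{ss}),
\]
and form $(r^2-s^2)L_1+3L_2$. I expect the terms to organize into a $\phi$-part and a $\phi_s$-part; collecting the coefficient of $\phi$ gives $(r^2-s^2)\big(P_{sss}+Q_{sss}(\cdots)\big)-3sP_{ss}+3s(\cdots)(Q_s-sQ_{ss})$ — wait, more carefully, the coefficient of $\phi$ is $(r^2-s^2)P_{sss}-3sP_{ss}$ together with $Q$-terms $s\big[(r^2-s^2)Q_{sss}+3(Q_s-sQ_{ss})\big]$, and the coefficient of $\phi_s$ is $3(r^2-s^2)P_{ss}+3(P-sP_s)$ together with $(r^2-s^2)\big[(r^2-s^2)Q_{sss}+3(Q_s-sQ_{ss})\big]$. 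So the whole thing vanishes provided two $\phi$-free identities hold: $(r^2-s^2)P_{sss}=3sP_{ss}$ combined with $(r^2-s^2)P_{ss}+(P-sP_s)=0$, and $(r^2-s^2)Q_{sss}+3(Q_s-sQ_{ss})=0$. Each of these is a straightforward check: for $P$, differentiate $P-sP_s=f_2 r^2/w$ in $s$ to get $-sP_{ss}=f_2 r^2(s/w^3)\cdot(-1)\cdot\ldots$, i.e. $P_{ss}=-f_2 r^2/w^3$, whence $(r^2-s^2)P_{ss}=-f_2 r^2/w=-(P-sP_s)$ as needed, and one more derivative gives the $P_{sss}$ relation. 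For $Q$, since $Q-c_0$ is of the same "degree-two in $(s,w)$" type and the operator $(r^2-s^2)\partial_s^3+3\partial_s-3s\partial_s^2$ annihilates $1,s^2$ and $sw$ by direct computation, we are done. (The constant $c_0$ drops out because it contributes only to $Q$ itself, not to any derivative appearing in $L_1,L_2$.)

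For the non-Berwald claim I would use the Berwald curvature formula quoted in Section 2, or more economically the fact (noted in the Section 2 remark) that a Landsberg surface is Berwald iff it is weakly Berwald, i.e. iff (\ref{mbe}) holds: $3(P-sP_s)+(r^2-s^2)(Q_s-sQ_{ss})=0$ (taking $n=2$). Substituting $P-sP_s=f_2 r^2/w$ and computing $Q_s-sQ_{ss}$: from $Q_s=2c_2 s+c_1(w-s^2/w)$ one gets $Q_{ss}=2c_2+c_1(-s/w-2s/w-s^3/w^3)\cdot\ldots$; carrying this out, $(r^2-s^2)(Q_s-sQ_{ss})$ reduces to a multiple of $w$ as well, specifically one should find it equals $r^2 c_1 w$ up to sign after simplification (consistent with the $P$-term also being a multiple of $w$). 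Then (\ref{mbe}) becomes $\big(3f_2+ r^2 c_1\big)\cdot(\text{nonzero multiple of } w/r^0)=0$, which fails exactly when $f_2\neq -r^2 c_1/3$, giving the stated condition. The main obstacle is purely computational bookkeeping — tracking the $w^{-1},w^{-3},w^{-5}$ terms and confirming the miraculous cancellations — rather than conceptual; the one genuine subtlety is justifying the "Landsberg $+$ weakly Berwald $\Rightarrow$ Berwald" reduction in the singular setting, for which I would instead fall back on directly showing the full $B^i_{\ jkl}$ from Section 2 is nonzero by exhibiting one nonvanishing component (e.g. the coefficient of $\delta^i_{\ j}\delta_{kl}+\delta^i_{\ k}\delta_{jl}+\delta^i_{\ l}\delta_{jk}$, namely $(P-sP_s)/u$, which is $f_2 r^2/(uw)\neq 0$ whenever $f_2\neq0$, together with a comparison against the other coefficients to rule out cancellation) — this sidesteps any appeal to the general structure theory.
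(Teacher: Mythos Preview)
Your argument is correct, and for the Landsberg part it is genuinely different from the paper's route. The paper does not attempt to separate $\phi$- and $\phi_s$-coefficients; instead it introduces $U=(s\phi+(r^2-s^2)\phi_s)/\phi$ and $W=(s\phi_r+r\phi_s)/\phi$, inverts the relations $P=-QU+W/(2r)$ and $Q=\dots$ to solve explicitly for $U$ (hence for $\phi_s/\phi$) in terms of $f_1,f_2,c_0,c_1,c_2$, and then substitutes that expression for $\phi_s/\phi$ into $(r^2-s^2)L_1+3L_2$ to verify vanishing. Your approach is cleaner: you observe that in $(r^2-s^2)L_1+3L_2$ the coefficient of $\phi$ is $\big[(r^2-s^2)P_{sss}-3sP_{ss}\big]+s\big[(r^2-s^2)Q_{sss}+3(Q_s-sQ_{ss})\big]$ and the coefficient of $\phi_s$ is $3\big[(r^2-s^2)P_{ss}+(P-sP_s)\big]+(r^2-s^2)\big[(r^2-s^2)Q_{sss}+3(Q_s-sQ_{ss})\big]$, and both vanish by the elementary identities you checked for the ansatz; in particular one never needs to know $\phi$. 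What the paper's method buys is the explicit formula for $\phi_s/\phi$, which is reused later in Section~3 to integrate up the metric; what your method buys is a $\phi$-free, linear-operator explanation of why exactly the span of $\{1,s,s^2,w,sw\}$ (with $w=\sqrt{r^2-s^2}$) produces Landsberg sprays, and it would go through verbatim for any metric whose spray happened to be of the stated form.

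For the non-Berwald claim you and the paper do the same thing: compute $3(P-sP_s)+(r^2-s^2)(Q_s-sQ_{ss})=r^2(3f_2+r^2c_1)/w$ and invoke the two-dimensional fact that Berwald $\Longleftrightarrow$ Landsberg $+$ weakly Berwald (the paper cites \cite{Sh1}). Your fallback of exhibiting a nonzero component of $B^i_{\ jkl}$ directly is a reasonable alternative, but note that the coefficient $(P-sP_s)/u$ you single out could vanish (when $f_2\equiv 0$) even though $3f_2+r^2 c_1\neq 0$; if you go this route you should instead pick a component whose nonvanishing is equivalent to $3f_2+r^2 c_1\neq 0$, or simply rely on the weakly-Berwald criterion as the paper does. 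One small wording issue: your sentence that the Landsberg equation ``involves $\phi$ only through the factor shared by $L_1$ and $L_2$'' is inaccurate as stated---$\phi$ and $\phi_s$ enter separately---but your subsequent computation handles this correctly.
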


\begin{proof}
According to the corollary \ref{2-lcc}, to prove $F$ is Landsbergian, we only need to verify that $P$ and $Q$ satisfy the equation
\begin{equation}\label{eq1}(r^2-s^2)L_1+3L_2=0\end{equation}
where 
\begin{eqnarray*}
L_1&:=&3\phi_sP_{ss}+\phi P_{sss}+\big(s\phi+(r^2-s^2)\phi_s\big)Q_{sss},\\
L_2&:=&-s\phi P_{ss}+\phi_s(P-sP_s)+(s\phi+(r^2-s^2)\phi_s)(Q_s-sQ_{ss}).\\
\end{eqnarray*}
However, this equation involves the metric function $F=u\phi(r,s)$. Thus it is necessary to express $\frac{\phi_s}{\phi}$ in terms of $P$ and $Q$. In order to do that, let us introduce 
 $U$ and $W$ so that
\[U:=\frac{s\phi+(r^2-s^2)\phi_s}{\phi},\qquad W:=\frac{s\phi_r+r\phi_s}{\phi}.\]
From the definition, it is easy to solve
\begin{equation*}
\phi_s=\frac{U-s}{r^2-s^2}\phi,\qquad\phi_r=\frac{1}{s}(W-\frac{r(U-s)}{r^2-s^2})\phi.
\end{equation*}
Plugging $\phi_s$ and $\phi_r$ into the formula of $P=-\frac{1}{\phi}\big(s\phi+(r^2-s^2)\phi_s\big)Q+\frac{1}{2r\phi}(s\phi_r+r\phi_s)$ and $Q=\frac{1}{2r}\frac{-\phi_r+s\phi_{rs}+r\phi_{ss}}{\phi-s\phi_s+(r^2-s^2)\phi_{ss}}$, one can get the expression
\begin{equation*}
\left\{ \begin{array}{l}
 P=-QU+\frac{W}{2r}\\
 \\
Q=\frac{1}{2rs}\frac{2rU-2rs-2r^2W+s(r^2-s^2)W_s+s^2W+sUW}{U^2-sU+(r^2-s^2)U_s}.   
 \end{array} \right.
          \end{equation*}
Now $P=f_1(r)s+f_2(r)\sqrt{r^2-s^2}$ and $Q=c_0(r)+c_2(r)s^2+c_1(r)s\sqrt{r^2-s^2}$. From above equalities, one can solve that 
$$U=\frac{s\sqrt{r^2-s^2}(r^2f_1+1)+2r^2f_2(r^2-s^2)}{\sqrt{r^2-s^2}(f_1s^2-2c_0(r^2-s^2)+1)+c_1r^2s^3+f_2(r^2-s^2)s-c_1r^4s}.$$
Therefore
$$\frac{\phi_s}{\phi}=\frac{s\sqrt{r^2-s^2}(2c_0+f_1)+2f_2r^2+(c_1r^2-f_2)s^2}{\sqrt{r^2-s^2}(f_1s^2-2c_0(r^2-s^2)+1)+c_1r^2s^3+f_2(r^2-s^2)s-c_1r^4s}.$$
Plugging $P$, $Q$ and $\frac{\phi_s}{\phi}$ into (\ref{eq1}), one will find the equation holds.  

On the other hand, we know that for two dimensional Finsler surface, it is Berwaldian if and only if it is Langangian and weakly Berwalidan \cite{Sh1}. When $P=f_1(r)s+f_2(r)\sqrt{r^2-s^2}$ and $Q=c_0(r)+c_2(r)s^2+c_1(r)s\sqrt{r^2-s^2}$, one can compute 
$$3(P-sP_s)+(r^2-s^2)(Q_s-sQ_{ss})=\frac{(c_1r^2+3f_2)r^2}{\sqrt{r^2-s^2}}.$$
Obviously if $f_2(r)\neq -\frac{r^2c_1(r)}{3}$, it is not weakly Berwaldian. So it is not Berwaldian.
\end{proof}

\begin{remark} Conversely, by applying the same method, one can prove the result: if $P=f_1(r)s+f_2(r)\sqrt{r^2-s^2+g(r)}$ and $Q=c_0(r)+c_2(r)s^2+c_1(r)s\sqrt{r^2-s^2+g(r)}$ is the spray coefficient of a spherically symmetric Finsler surface and if it is Landsbergian, then $g(r)=0$ or $f_2(r)=c_1(r)=0$. This is the reason why we pose the condition on $P$ and $Q$ in above theorem. 
\end{remark}

\section{The spherically symmetric Finsler surface with J=0 and K=0}

For a spherically symmetric Finsler metric $F=u\phi(r,s)$ on a domain $\Omega$ in $R^n$, its Ricci curvature  can be expressed in terms of its spray coefficient $P$ and $Q$ as \cite{MZ}
\[\bold{Ric}=(n-1)R_1+(r^2-s^2)R_3 \]
where 
\[R_1=2Q-\frac{s}{r}P_r-P_s+2(r^2-s^2)P_sQ+P^2+2sPQ\]
and \[R_3=\frac{2}{r}Q_r-Q_{ss}-\frac{s}{r}Q_{rs}+2(r^2-s^2)QQ_{ss}+4Q^2-(r^2-s^2)Q_s^2-2sQQ_s.\]

When in two dimensional case, the flag curvature of a Finsler surface only depends on the point and the direction of a flag's pole. It automatically has of scalar curvature. This observation implies the class of the Finsler metrics with a constant flag curvature equals to the class  of the metrics with constant Ricci curvature in two dimensional case.  If considering a spherically symmetric Finsler surface with a vanishing flag curvature,  we have the following theorem.
\begin{theorem}\label{cfct}  Let  $(\Omega, F)$  be a spherically symmetric Finsler surface in $R^2$. It has a vanishing flag curvature if and only if its spray coefficient satisfies
\begin{equation}\label{eq2}
R_1+(r^2-s^2)R_3=0\end{equation}
where 
\begin{eqnarray*}
R_1&=&2Q-\frac{s}{r}P_r-P_s+2(r^2-s^2)P_sQ+P^2+2sPQ,\\
R_3&=&\frac{2}{r}Q_r-Q_{ss}-\frac{s}{r}Q_{rs}+2(r^2-s^2)QQ_{ss}+4Q^2-(r^2-s^2)Q_s^2-2sQQ_s.
\end{eqnarray*}
\begin{remark} The equation in above theorem is weaker than the characterized equation of constant flag curvature in higher dimension \cite{MZ}.
\end{remark}

\end{theorem}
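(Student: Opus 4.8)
The plan is to deduce the statement directly from the Ricci-curvature formula quoted at the start of this section, specialized to $n=2$. The structural fact that makes this work is the one already noted above: on a Finsler surface the flag curvature $K(x,y)$ depends only on the base point $x$ and the direction $y$ of the flag pole, so every such surface is automatically of scalar flag curvature. For a metric of scalar flag curvature one has $\mathbf{Ric}(x,y)=(n-1)K(x,y)F^2(x,y)$, which for $n=2$ reduces to $\mathbf{Ric}=K\,F^2$.

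First I would note that $F^2=u^2\phi(r,s)^2$ does not vanish on the region where $F$ is defined (away from the zero section, and away from the singular directions if the metric is singular). Hence $\mathbf{Ric}\equiv 0$ there if and only if $K\equiv 0$ there; in other words, $(\Omega,F)$ has vanishing flag curvature exactly when its Ricci scalar vanishes identically. Next I would substitute $n=2$ into the formula $\mathbf{Ric}=(n-1)R_1+(r^2-s^2)R_3$, obtaining $\mathbf{Ric}=R_1+(r^2-s^2)R_3$ with $R_1$, $R_3$ the explicit expressions in $P$, $Q$ and their derivatives recorded above. Combining the two displays gives $K\equiv 0\iff R_1+(r^2-s^2)R_3=0$, which is exactly equation (\ref{eq2}).

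There is no genuine analytic difficulty here; the only step deserving a word of justification is the implication $\mathbf{Ric}\equiv 0\Rightarrow K\equiv 0$, i.e. the appeal to the scalar-flag-curvature property of surfaces, which is what upgrades the argument from a one-way implication to the asserted equivalence. Finally, the remark following the theorem --- that (\ref{eq2}) is weaker than the characterizing system for constant flag curvature in higher dimension --- then follows by comparing the single equation $R_1+(r^2-s^2)R_3=0$ with the separate conditions on the components of the Riemann curvature that appear in the higher-dimensional treatment of \cite{MZ}.
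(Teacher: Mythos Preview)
Your proposal is correct and follows essentially the same approach as the paper. The paper does not give a formal proof environment for this theorem; the argument is contained in the paragraph preceding it, which observes that a two-dimensional Finsler surface is automatically of scalar flag curvature, so vanishing flag curvature is equivalent to vanishing Ricci curvature, and then one simply sets $n=2$ in the Ricci formula $\mathbf{Ric}=(n-1)R_1+(r^2-s^2)R_3$. Your write-up makes the intermediate identity $\mathbf{Ric}=(n-1)K\,F^2$ and the nonvanishing of $F^2$ explicit, which is a welcome bit of extra precision, but the underlying reasoning is identical.
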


Now it is ready for us to construct the Finsler surface with J=0 and K=0. However, here we would prefer specify its spray coefficient first instead of the metric function since every quantity can be formulated by the spray coefficient.  

According to the theorem \ref{2-le},  if the spray coefficient of a spherically symmetric Finsler surface is given by 
$$P=f_1(r)s+f_2(r)\sqrt{r^2-s^2}$$ and $$ Q=c_0(r)+c_2(r)s^2+c_1(r)s\sqrt{r^2-s^2},$$
then it is Landsbergian. Plugging $P$ and $Q$ into the equation (\ref{eq2}) in theorem \ref{cfct} will obtain
\[A_3(r)s^3+A_2(r)s^2\sqrt{r^2-s^2}+A_1(r)s+A_0(r)\sqrt{r^2-s^2}=0\]
where 
\begin{eqnarray*}
A_3(r)&=&-(6rc_1+r^2c_1'+2r^3c_1f_1+2rf_1f_2-f_2'),\\
A_2(r)&=&-(4rc_0^2+2c_0'+4r^3c_0c_2-r^5c_1^2-4rc_2-2r^3c_2f_1-rf_1^2+f_1'+rf_2^2),\\
A_1(r)&=&-r^2A_3,\\
A_0(r)&=&r(4r^2c_0^2+2c_0+2rc_0'+4r^4c_0c_2+2r^2c_0f_1-r^6c_1^2-2r^2c_2-f_1+r^2f_2^2).
\end{eqnarray*}
Since above equality holds for any arbitrary $s$, one can conclude that it has a vanishing flag curvature if and only if the following equation system holds:
\begin{equation*}
\left\{ \begin{array}{l}
-(6rc_1+r^2c_1'+2r^3c_1f_1+2rf_1f_2-f_2')=0\\
\\
-(4rc_0^2+2c_0'+4r^3c_0c_2-r^5c_1^2-4rc_2-2r^3c_2f_1-rf_1^2+f_1'+rf_2^2)=0\\
\\
  4r^2c_0^2+2c_0+2rc_0'+4r^4c_0c_2+2r^2c_0f_1-r^6c_1^2-2r^2c_2-f_1+r^2f_2^2=0.
 \end{array} \right.
          \end{equation*}
Note that if multiplying the second equation by $r$, then adding the third equation yields
\[2c_0+2r^2c_0f_1+2r^2c_2+2r^4f_1c_2+r^2f_1^2-f_1-rf_1'=0.\]
Therefore the above equation system is equivalent to 
\begin{equation*}
\left\{ \begin{array}{l}
6rc_1+r^2c_1'+2r^3c_1f_1+2rf_1f_2-f_2'=0\\
\\
2c_0+2r^2c_0f_1+2r^2c_2+2r^4f_1c_2+r^2f_1^2-f_1-rf_1'=0\\
\\
  4r^2c_0^2+2c_0+2rc_0'+4r^4c_0c_2+2r^2c_0f_1-r^6c_1^2-2r^2c_2-f_1+r^2f_2^2=0.
 \end{array} \right.
          \end{equation*}

As the number of the equations is less than the number of the functions, there should exist infinite many solutions.  
Here for our purpose, we just pick up one solution 
\begin{equation*}
\left\{ \begin{array}{l}
f_1(r)=-\frac{1}{r^2}\\
f_2(r)=\frac{c}{r^2} \\
c_1(r)=-\frac{1}{r^4}\\
c_2(r)=-\frac{4r^4c_0^2+2r^3c_0'+c^2}{2r^4(2r^2c_0-1)}
 \end{array} \right.
          \end{equation*}
          depending on $c_0(r)$ and a constant $c$.
That is to say its spray coefficient $G^i=uPy^i+u^2Qx^i$ is given by
\[P=-\frac{s}{r^2}+\frac{c}{r^2}\sqrt{r^2-s^2} \] 
and \[ Q=c_0(r)-\frac{4r^4c_0^2+2r^3c_0'+c^2}{2r^4(2r^2c_0-1)}s^2-\frac{s}{r^4}\sqrt{r^2-s^2}.\]

Now we only need to solve the metric function $F=u\phi(r,s)$ from $P$ and $Q$. If let 
\[U:=\frac{s\phi+(r^2-s^2)\phi_s}{\phi},\qquad W:=\frac{s\phi_r+r\phi_s}{\phi},\]
then it is easy to see
\begin{equation*}
\phi_s=\frac{U-s}{r^2-s^2}\phi,\qquad\phi_r=\frac{1}{s}(W-\frac{r(U-s)}{r^2-s^2})\phi.
\end{equation*}
Therefore, $P$ and $Q$ can be expressed by $U$ and $W$ via
\begin{equation*}
\left\{ \begin{array}{l}
 P=-QU+\frac{W}{2r}\\
 \\
Q=\frac{1}{2rs}\frac{2rU-2rs-2r^2W+s(r^2-s^2)W_s+s^2W+sUW}{U^2-sU+(r^2-s^2)U_s}.   
 \end{array} \right.
          \end{equation*}
  Plugging $P$ and $Q$ into the above equation, one can solve
 \[U=\frac{2cr^2}{(c+1)s-\sqrt{r^2-s^2}(2r^2c_0-1)}\]
and
\[W=2r(P+UQ).\]
Hence
\begin{equation}\label{eq3}
\left\{ \begin{array}{ll}
(\ln\phi)_s=&\frac{(c+1)s^2-(2r^2c_0-1)s\sqrt{r^2-s^2}-2r^2c}{(c+1)s^3-(c+1)r^2s+(2c_0r^2-1)(r^2-s^2)\sqrt{r^2-s^2}}\\
\\
(\ln\phi)_r=&-\frac{\sqrt{r^2-s^2}(2c(c_0r^2-1)(c-1)(r^2-s^2)+r^2-s^2-4r^4c_0-8c_0^2r^4s^2+4c_0^2r^6+8c_0r^2s^2)}{r(\sqrt{r^2-s^2}(4c_0^2r^4-4c_0r^2+1)-2c_0s(cr^2s+r^2)+(c+1)s)(r^2-s^2)}\\
&-\frac{5cr^2s-2c^3r^2s-4cc_0'r^5s+12cc_0r^2s^3-14cc_0r^4s+4cc_0'r^3s^3+r^2s-4cs^3-2c_0r^4s+4c_0r^2s^3+2c^3s^3-2s^3}{r(\sqrt{r^2-s^2}(4c_0^2r^4-4c_0r^2+1)-2c_0s(cr^2s+r^2)+(c+1)s)(r^2-s^2)}.
\end{array}\right.
\end{equation}
It can be checked that $(\ln\phi)_{sr}=(\ln\phi)_{rs}$.
This means there exists $\phi(r,s)$ so that it is a solution of above equation system. Integrating the first equation of (\ref{eq3}) will obtain
\[\phi(r,s)=\exp\Big(\int_{0}^s\frac{(c+1)s^2-(2r^2c_0-1)s\sqrt{r^2-s^2}-2r^2c}{(c+1)s^3-(c+1)r^2s+(2c_0r^2-1)(r^2-s^2)\sqrt{r^2-s^2}}ds\Big)a(r).\]
In order to find out $a(r)$, notice that $\phi(r,0)=a(r)$. Thus $$\phi(r,0)_r=a'(r).$$
According to the second equation of (\ref{eq3}), we know $a(r)$ satisfies
\[\frac{a'}{a}=-\frac{4c_0r^2r^4-4r^2c_0+1+2c(2c_0r^2-1)(c-1)}{r(4c_0r^4-4r^2c_0+1)}.\]
Integrating above equality will yield
\[a(r)=\exp(\int-\frac{4c_0r^2r^4-4r^2c_0+1+2c(2c_0r^2-1)(c-1)}{r(4c_0r^4-4r^2c_0+1)}dr).\]
So the construction is completed.  To sum up, we have the following theorem.
\begin{theorem}If the Finsler surface is given by \[F=u \exp(\int_0^s\frac{(c+1)s^2-(2r^2c_0-1)s\sqrt{r^2-s^2}-2r^2c}{(c+1)s^3-(c+1)r^2s+(2c_0r^2-1)(r^2-s^2)\sqrt{r^2-s^2}} ds)a(r)\]
 where $r=\sqrt{x_1^2+x_2^2}$, $u=\sqrt{y_1^2+y_2^2}$, $s=x_1y_1+x_2y_2$ and $$a(r)=\exp(\int-\frac{4c_0r^2r^4-4r^2c_0+1+2c(2c_0r^2-1)(c-1)}{r(4c_0r^4-4r^2c_0+1)}dr),$$
$c_0$ is a smooth function of $r$ and $c\neq \frac{1}{3}$ is a constant, then it is non-Berwaldian, Landsbergian and has a vanishing flag curvature. 

\end{theorem}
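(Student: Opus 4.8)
The plan is to regard the spray coefficient $G^i=uPy^i+u^2Qx^i$ with
\[P=-\frac{s}{r^2}+\frac{c}{r^2}\sqrt{r^2-s^2},\qquad Q=c_0(r)-\frac{4r^4c_0^2+2r^3c_0'+c^2}{2r^4(2r^2c_0-1)}s^2-\frac{s}{r^4}\sqrt{r^2-s^2}\]
as the primary datum, to extract the Landsberg, non-Berwald and flat properties from it, and only at the very end to reconstruct the metric function $F=u\phi(r,s)$ from $P$ and $Q$. The first observation is that this $P$ and $Q$ have exactly the special shape $P=f_1(r)s+f_2(r)\sqrt{r^2-s^2}$, $Q=c_0(r)+c_2(r)s^2+c_1(r)s\sqrt{r^2-s^2}$ with $f_1=-1/r^2$, $f_2=c/r^2$, $c_1=-1/r^4$ and $c_2$ as above. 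Hence Theorem~\ref{2-le} applies verbatim: the surface is automatically Landsbergian, and it is non-Berwaldian as long as $f_2(r)\neq-\tfrac{r^2c_1(r)}{3}$. Since here $-\tfrac{r^2c_1(r)}{3}=\tfrac{1}{3r^2}$, that inequality is precisely $c\neq\tfrac13$, which is the standing hypothesis. So the Landsberg and non-Berwald assertions require nothing beyond Theorem~\ref{2-le}.

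For the flag curvature the idea is to invoke Theorem~\ref{cfct}: the surface has $K=0$ if and only if $R_1+(r^2-s^2)R_3=0$, i.e. equation~(\ref{eq2}), and for $P,Q$ of the shape above this collapses to the polynomial identity $A_3(r)s^3+A_2(r)s^2\sqrt{r^2-s^2}+A_1(r)s+A_0(r)\sqrt{r^2-s^2}=0$, with the four coefficients $A_0,\dots,A_3$ displayed in the text. Because $A_1=-r^2A_3$, this is equivalent to the three ordinary differential equations $A_3=0$, $A_2=0$, $A_0=0$, and after the indicated combination (multiply the second by $r$, add the third) to the reduced system in $f_1,f_2,c_0,c_1,c_2$. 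The only thing left is to confirm that our quadruple solves that reduced system: with $f_1=-1/r^2$, $c_1=-1/r^4$, $f_2=c/r^2$ the first two equations hold upon substitution — in the second the two $c_2$-terms cancel — while the third equation, solved for $c_2$, is exactly the formula $c_2=-\frac{4r^4c_0^2+2r^3c_0'+c^2}{2r^4(2r^2c_0-1)}$ that we took. Therefore $K=0$.

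To recover $F$, the plan is to follow the substitution already used in the paper: put $U=\frac{s\phi+(r^2-s^2)\phi_s}{\phi}$ and $W=\frac{s\phi_r+r\phi_s}{\phi}$, so that $\phi_s=\frac{U-s}{r^2-s^2}\phi$ and $\phi_r=\frac1s\big(W-\frac{r(U-s)}{r^2-s^2}\big)\phi$, and $P,Q$ become the algebraic / first-order system in $U,W$ recalled just before~(\ref{eq3}). Solving that system with the present $P,Q$ yields $U=\frac{2cr^2}{(c+1)s-(2r^2c_0-1)\sqrt{r^2-s^2}}$ and $W=2r(P+UQ)$, hence the pair of partial differential equations~(\ref{eq3}) for $\ln\phi$. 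One then checks that $(\ln\phi)_{sr}=(\ln\phi)_{rs}$; this integrability condition guarantees a solution $\phi(r,s)$. Integrating the $s$-equation of~(\ref{eq3}) from $0$ to $s$ gives $\phi(r,s)$ up to the factor $a(r)=\phi(r,0)$, and evaluating the $r$-equation at $s=0$ reduces to a linear first-order ODE for $a$ whose solution is the stated expression for $a(r)$. This recovers precisely the closed form of $F$ in the statement, on any domain $\Omega$ on which the denominators appearing in $U$, $W$, $\phi$ and $a$ stay nonzero so that $F$ is a bona fide (singular) Finsler metric.

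The step I expect to be the main obstacle is the last one: inverting the $U,W$-system to arrive at the right-hand sides of~(\ref{eq3}) and, above all, verifying the mixed-partial identity $(\ln\phi)_{sr}=(\ln\phi)_{rs}$ for those rather bulky rational expressions. This is a finite but long computation; once it is carried out the existence and explicit form of $\phi$ follow immediately, while the Landsberg, non-Berwald and $K=0$ conclusions are already secured by Theorems~\ref{2-le} and~\ref{cfct}.
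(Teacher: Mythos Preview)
Your proposal is correct and follows essentially the same route as the paper: the construction in Section~3 preceding the theorem is exactly the argument you outline (invoke Theorem~\ref{2-le} for the Landsberg and non-Berwald parts via $c\neq\tfrac13$, plug the special $P,Q$ into equation~(\ref{eq2}) and reduce to the three ODEs, verify the chosen $f_1,f_2,c_1,c_2$ solve them, then recover $\phi$ through the $U,W$ substitution and the integrability check). The paper's formal proof of the theorem is just ``a straightforward computation,'' and you have correctly identified that the bulk of the work is the mixed-partial verification in~(\ref{eq3}).
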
 
\begin{proof} It can be proved by a straightforward computation.
\end{proof}

\begin{remark}Actually one can check the Finsler surfaces in above theorem is not the Douglas metrics. It is not clear whether there is any Landsberg surface with a non-zero constant flag curvature.
\end{remark}

\begin{corollary} (R. Bryant) There exists a class of singular non-Berwaldian Finsler surfaces with J=0 and K=0. 
\end{corollary}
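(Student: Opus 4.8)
The plan is to read off the required family directly from the theorem just established: the metrics $F=u\phi(r,s)$ with $\phi$ as in that statement, for a constant $c\neq\frac13$ and a suitable smooth function $c_0=c_0(r)$, are exactly the claimed examples. What remains is to confirm that each of the four requested properties really holds — that we have an honest (singular) Finsler surface, that $J=0$, that $K=0$, and that it is not Berwaldian — and that the family is nonempty.

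First I would exhibit a concrete member by fixing the free data, say $c_0\equiv 0$ and, to be definite, $c=2$. Then $f_1=-1/r^2$, $f_2=2/r^2$, $c_1=-1/r^4$, $c_2=2/r^4$ are explicit rational functions on $\{r>0\}$, so Theorem \ref{2-le} applies verbatim: the spray coefficient $G^i=uPy^i+u^2Qx^i$ with $P=-s/r^2+(2/r^2)\sqrt{r^2-s^2}$ and the corresponding $Q$ is Landsbergian. Moreover $-r^2c_1/3=1/(3r^2)$, so the condition $f_2\neq -r^2c_1/3$ of that theorem is precisely $c\neq\frac13$, which holds; hence its second half gives that the metric is not weakly Berwaldian, and therefore not Berwaldian. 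The vanishing of the flag curvature is immediate from the computation preceding the theorem: $P$ and $Q$ were chosen exactly so that $R_1+(r^2-s^2)R_3=0$, so Theorem \ref{cfct} gives $K=0$ (equivalently $\mathbf{Ric}=0$, using that a Finsler surface is automatically of scalar curvature). Thus all three curvature/connection conclusions hold as soon as the objects are legitimate Finsler metrics.

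Next I would pin down the domain. Integrating the first equation of \eqref{eq3} gives $\phi(r,s)$ up to the factor $a(r)=\phi(r,0)$, and the second equation of \eqref{eq3} determines $a(r)$; the compatibility $(\ln\phi)_{sr}=(\ln\phi)_{rs}$ that makes such a $\phi$ exist is verified by a direct differentiation. I would then restrict to a conic region on which $|s|<|x|$ and on which the denominators $(c+1)s^3-(c+1)r^2s+(2c_0r^2-1)(r^2-s^2)\sqrt{r^2-s^2}$ and $4c_0r^4-4c_0r^2+1$ do not vanish, so that $\phi$ is smooth and positive there; after shrinking the domain and the cone this holds. On that region one checks the positivity of $\phi-s\phi_s$ and of $\phi-s\phi_s+(r^2-s^2)\phi_{ss}$ — the quantities appearing in \eqref{mt} and in the coefficients $\tilde\rho_i$ — so that $g_{ij}$ from \eqref{mt} is positive definite and $(\Omega,F)$ is a bona fide Finsler surface away from the radial directions.

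The genuinely delicate point, and the step I expect to be the main obstacle, is to show that the singularity along the radial line $y\parallel x$ is essential, not removable. As $|s|\to r$ one has $\sqrt{r^2-s^2}\to 0$; the numerator of the integrand in the first equation of \eqref{eq3}, namely $(c+1)s^2-(2r^2c_0-1)s\sqrt{r^2-s^2}-2r^2c$, then tends to $(1-c)r^2\neq 0$, while the denominator tends to $0$. Hence $(\ln\phi)_s$ blows up like $(r-s)^{-1}$ near $s=\pm r$, so $\phi$, and a fortiori the fundamental tensor, cannot be extended to a smooth strongly convex norm that includes the radial direction. Therefore the metrics of the preceding theorem form a class of \emph{singular} non-Berwaldian Landsberg surfaces with $K=0$, which is exactly Bryant's assertion. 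Everything apart from this singularity claim — $J=0$, $K=0$, and the failure of the Berwald property — is just a citation of Theorems \ref{2-le} and \ref{cfct} together with the explicit choice of $P$ and $Q$; the real labor is the careful delimitation of the region of strong convexity and the check that the radial singularity survives it.
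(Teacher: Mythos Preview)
Your proposal is correct and follows the same route as the paper: the corollary is read off from the preceding theorem, whose explicit family already has $J=0$, $K=0$, and fails to be Berwaldian for $c\neq\tfrac13$. The paper treats the corollary as immediate and gives no separate argument; your added discussion of the domain of strong convexity and the essential nature of the radial singularity is more than the paper supplies, but it is consistent with the paper's informal remark that the metric is singular along the radial direction.
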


{\small DEPARTMENT OF MATHEMATICS, EAST CHINA NORMAL UNIVERSITY }

{\small SHANGHAI 200241, CHINA.}

{\small E-mail address: lfzhou@math.ecnu.edu.cn}

\end{document}